\newtheorem{problem}{Problem}
\newtheorem{lemma}{Lemma}
\newtheorem{theorem}{Theorem}
\theoremstyle{definition}
\newtheorem{definition}{Definition}
\newtheorem{example}{Example}
\theoremstyle{remark}
\newtheorem {remark}{Remark}
\DeclareMathOperator{\Spec}{Spec}
\DeclareMathOperator{\reg}{reg}
\DeclareMathOperator{\Cl}{Cl}
\def\GG{{\mathbb G}}
\def\CC{{\mathbb C}}
\def\ZZ{{\mathbb Z}}
\def\PP{{\mathbb P}}
\def\AA{{\mathbb A}}
\begin{document}
\date{}
\title[Polynomial curves on trinomial hypersurfaces]{Polynomial curves on trinomial hypersurfaces}
\author{Ivan Arzhantsev}
\thanks{The research was supported by the grant RSF-DFG 16-41-01013}
\address{National Research University Higher School of Economics, Faculty of Computer Science, Kochnovskiy Proezd 3, Moscow, 125319 Russia}
\email{arjantsev@hse.ru}

\subjclass[2010]{Primary 14M20, 14R20; \ Secondary 11D41,14J50}

\keywords{Diophantine equation, polynomial curve, torus action, Schwartz-Halphen curve, platonic triple, the abc-Theorem}

\maketitle

\begin{abstract}
We prove that every rational trinomial affine hypersurface admits a horizontal polynomial curve. This result provides an explicit non-trivial polynomial solution to a trinomial equation. Also we show that a trinomial affine hypersurface admits a Schwartz-Halphen curve if and only if the trinomial comes from a platonic triple. It is a generalization of Schwartz-Halphen's Theorem for Pham-Brieskorn surfaces.
\end{abstract}

\section{Introduction}
It is well known that the Fermat equation $z_0^p+z_1^p+z_2^p=0$, $p\ge 3$, has no non-trivial solution over the polynomial ring $\CC[x]$. The reason for this is that the projective curve defined by the Fermat equation in $\PP^2$ is not rational.

It is natural to consider more general equations
\begin{equation}
z_0^p+z_1^q+z_2^r=0, \quad p,q,r \in\ZZ_{\ge 2},
\label{PB}
\end{equation}
and to ask for polynomial solutions. Geometrically such a solution corresponds to a polynomial curve $\tau\colon\CC\to V_{p,q,r}$, where $V_{p,q,r}:=V(z_0^p+z_1^q+z_2^r)$ is called the Pham-Brieskorn surface in~$\CC^3$. Here we have trivial solutions, namely,
$$
z_0(x)=\alpha\phi(x)^{m/p}, \quad
z_1(x)=\beta\phi(x)^{m/q}, \quad
z_2(x)=\gamma\phi(x)^{m/r},
$$
where $m=\text{lcm}(p,q,r)$, $\phi(x)\in\CC[x]$, and $\alpha,\beta,\gamma\in\CC$ with
$\alpha^p+\beta^q+\gamma^r=0$.

\smallskip

The following result is stated in \cite[Theorem~0.1~(a)]{FZ} with references to \cite{BK}, \cite{Ev} and \cite[Corollary of Lemma~8]{KZ}. 

\begin{theorem} \label{T1}
The Pham-Brieskorn surface $V_{p,q,r}$ admits a non-trivial polynomial curve if and only if one of the following conditions hold.
\begin{enumerate}
\item[(i)]
At least one of the numbers $p,q,r$ is coprime with the others.
\item[(ii)]
We have $\text{gcd}(p,q)=\text{gcd}(p,r)=\text{gcd}(q,r)=2$.
\end{enumerate}
\end{theorem}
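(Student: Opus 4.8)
The plan is to prove the two implications separately, after recording the reformulation that a polynomial solution $(z_0,z_1,z_2)$ is \emph{trivial} precisely when the three summands $z_0^p,\,z_1^q,\,z_2^r$ are pairwise proportional as polynomials, equivalently when their common quotient is a single polynomial raised to the exponents $m/p,m/q,m/r$. Thus ``non-trivial'' means exactly that the curve is \emph{horizontal}, i.e. that the morphism $\varphi=[z_0^p:z_1^q:z_2^r]$ from $\AA^1$ to the projective line $\{u+v+w=0\}\cong\PP^1$ is non-constant; I would phrase everything in terms of $\varphi$ and of the $\GG_m$-quotient of $V_{p,q,r}$ coming from the grading.

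For sufficiency under (ii) I would exploit the conic. Writing $p=2a$, $q=2b$, $r=2c$, the hypotheses $\gcd(p,q)=\gcd(p,r)=\gcd(q,r)=2$ force $a,b,c$ to be pairwise coprime, and the equation becomes $(z_0^a)^2+(z_1^b)^2+(z_2^c)^2=0$. The smooth conic $\{U^2+V^2+W^2=0\}$ is rational, so it carries a two-parameter family of polynomial parametrizations $(U,V,W)=(U(x),V(x),W(x))$; the task is to select one in which $U,V,W$ are perfect $a$-th, $b$-th and $c$-th powers. I would arrange this through the multiplicative structure of an explicit parametrization such as $[\,i(s^2+t^2):s^2-t^2:2st\,]$ together with the pairwise coprimality of $a,b,c$, choosing $s,t$ so that the prescribed root multiplicities are solvable by the Chinese Remainder Theorem. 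For sufficiency under (i), say $\gcd(p,q)=\gcd(p,r)=1$ so that $\gcd(p,qr)=1$, I would produce an explicit solution: when the two exponents coprime to $p$ are distinct a degenerate solution with one coordinate a monomial already works (one checks non-triviality via the divisibility $r\nmid q$, and switches the roles of $q$ and $r$ otherwise), while the case $q=r$ I would handle by factoring $z_1^q+z_2^r$ into linear forms over $\CC$ and distributing the factors of a prescribed power of $x$ unevenly so as to keep $z_1,z_2$ non-proportional.

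For necessity I would use the polynomial abc-Theorem (Mason--Stothers). Assume a non-trivial solution exists and that (i) fails, i.e. no exponent is coprime to the other two. Dividing by $D=\gcd(z_0^p,z_1^q,z_2^r)$ makes the three terms pairwise coprime, and Mason--Stothers then bounds their degrees by the number of distinct roots of the product. Extracting the common powers dictated by the three pairwise gcds should reduce the equation to an auxiliary Fermat-type equation $X^{\mu}+Y^{\nu}+Z^{\lambda}=0$ with $\tfrac1\mu+\tfrac1\nu+\tfrac1\lambda\le 1$, which by Mason--Stothers admits only proportional polynomial solutions; reading this back forces the original solution to be trivial unless all three pairwise gcds equal $2$, i.e. unless (ii) holds. (For instance, the excluded triple $(4,4,2)$ reduces to $s^4-t^4=\square$, insoluble over $\CC[x]$.)

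The step I expect to be the main obstacle is the sufficiency part of (i) in the hyperbolic range $\tfrac1p+\tfrac1q+\tfrac1r\le 1$: there no primitive (Schwartz--Halphen) solution exists, so every non-trivial curve must carry common factors among its coordinates, and producing one amounts to engineering those common factors so that $z_1^q+z_2^r$ becomes an exact $p$-th power while the three terms stay non-proportional. Keeping the multiplicity bookkeeping correct in the presence of such common factors is also precisely what must be controlled on the necessity side, and it is exactly this feature that distinguishes the present statement from the spherical regime governing Schwartz--Halphen curves.
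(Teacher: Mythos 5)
First, a point of reference: the paper does not prove Theorem~\ref{T1} at all; it is quoted from Flenner--Zaidenberg with references to Barthel--Kaup, Evyatar and Kaliman--Zaidenberg. The constructive half does, however, reappear inside the proof of Theorem~\ref{Hor}, so that is the natural benchmark. Measured against it, your architecture (non-trivial $=$ horizontal, explicit constructions for sufficiency, Mason--Stothers for necessity) is the right one, and your treatment of case~(ii) via the conic is actually a clean and workable alternative to the paper's: with $s=x$, $t=1$ the three polynomials $i(x^2+1)$, $x^2-1$, $2x$ are pairwise coprime, so a multiplier $\rho=(x^2+1)^{\alpha}(x^2-1)^{\beta}x^{\gamma}$ with $\alpha\equiv-1\ (a)$, $\alpha\equiv 0\ (bc)$, etc., makes the three scaled coordinates exact $a$-th, $b$-th, $c$-th powers, and the ratio $U^2/V^2$ is visibly non-constant. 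That part I would accept.

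The two remaining parts have genuine gaps. For sufficiency under~(i) you only gesture at a construction, and the gestures do not work: a solution ``with one coordinate a monomial'' in which $z_1^q$ and $z_2^r$ are both powers of $x$ is exactly a $\GG_m$-orbit closure, hence trivial, and ``factoring $z_1^q+z_2^r$ into linear forms and distributing factors of $x$ unevenly'' is not a proof in the hyperbolic range, which you yourself flag as the main obstacle. The missing device (Evyatar's, reproduced in the proof of Theorem~\ref{Hor}) is: with $\epsilon^q=-1$ the constant terms cancel in $(x^r+1)^p+(\epsilon(2x^r+1))^q$, so this sum equals $-x^r l(x)$; choosing $u,v>0$ with $vr-upq=1$ (possible since $r$ is coprime to $pq$) and multiplying through by $l(x)^{upq}$ turns the three terms simultaneously into a $p$-th, a $q$-th and an $r$-th power, namely $z_0=l^{uq}(x^r+1)$, $z_1=\epsilon l^{up}(2x^r+1)$, $z_2=l^{v}x$; horizontality follows since $x^r+1$ and $x$ are coprime. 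This handles all of case~(i) uniformly, with no sub-cases. For necessity, your step ``dividing by $D=\gcd(z_0^p,z_1^q,z_2^r)$ makes the three terms pairwise coprime'' is false (a factor can divide two of the three terms without dividing the third, and then survives the division), and after dividing by any common factor the quotients are no longer pure $p$-th, $q$-th, $r$-th powers, so the Mason--Stothers bound cannot be read off degree-by-degree as you describe. A correct argument must either do a root-by-root multiplicity count (as in the Kaliman--Zaidenberg lemma the paper cites, or as the paper does for Theorem~\ref{Hor1} and Theorem~\ref{SH}), or argue geometrically that a horizontal curve dominates the rational quotient and hence forces $V_{p,q,r}$ to be rational, reducing to the known rationality criterion --- which is exactly the route the paper takes in Theorem~\ref{Hor}.
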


Moreover, the conditions of Theorem~\ref{T1} characterize rational  Pham-Brieskorn surfaces, see~\cite[p.~117]{BK}.

\smallskip

Now we come to a special class of non-trivial polynomial curves on $V_{p,q,r}$. Let us recall that a triple of positive integers $(p,q,r)$ is called \emph{platonic}
if we have $1/p+1/q+1/r>1$. It is well known that the platonic triples up to renumbering are the following ones
$$
(5,3,2), \quad (4,3,2), \quad (3,3,2), \quad (p,2,2), \quad (p,q,1), \quad p,q\in\ZZ_{>0}.
$$

In 1873, Schwartz~\cite{Sch} found polynomial solutions of equation~(\ref{PB}) in coprime polynomials $z_0(x), z_1(x), z_2(x)$ for every platonic triple $(p,q,r)$ with
$p,q,r\ge 2$; see also \cite{We} and \cite{FZ} for explicit formulas. 
 
In 1883, Halphen~\cite{Hal} proved that equaition~(\ref{PB}) has no solution in non-constant coprime polynomials when $1/p+1/q+1/r\le 1$. We refer to~\cite{Kl} 
for a historical account on the subject. 

\smallskip

Following \cite[Theorem 0.1 (b)]{FZ}, we reformulate these results in terms of polynomial curves. 

\begin{theorem} \label{T2}
The Pham-Brieskorn surface $V_{p,q,r}$ admits a polynomial curve not passing through the origin if and only if $(p,q,r)$ is a platonic triple.
\end{theorem}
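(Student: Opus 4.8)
The plan is to convert the statement about curves into one about coprime polynomial solutions of equation~(\ref{PB}), and then to treat the two implications by Schwartz's construction and by the polynomial \emph{abc}-theorem (the Mason--Stothers theorem), respectively. First I would write a polynomial curve as $\tau(x)=(z_0(x),z_1(x),z_2(x))$ with $z_i\in\CC[x]$ satisfying $z_0^p+z_1^q+z_2^r=0$; since a curve is non-constant, at least one $z_i$ is non-constant. The preliminary observation I would record is that $\tau$ misses the origin precisely when $z_0,z_1,z_2$ have no common root, and that, because of the defining equation, this is equivalent to \emph{pairwise} coprimeness: if $\zeta$ were a common root of $z_0$ and $z_1$, then $z_2(\zeta)^r=-(z_0(\zeta)^p+z_1(\zeta)^q)=0$ would make $\zeta$ a common root of all three, contradicting the avoidance of the origin. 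This equivalence is what makes the hypothesis of the \emph{abc}-theorem available and simultaneously identifies curves missing the origin with the non-constant coprime solutions of Schwartz and Halphen.

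For the ``only if'' direction I would start from a non-constant curve missing the origin, so that $z_0,z_1,z_2$, and therefore $a:=z_0^p$, $b:=z_1^q$, $c:=z_2^r$, are pairwise coprime. Setting $d_i=\deg z_i$, the Mason--Stothers theorem applied to $a+b+c=0$ gives $\max(pd_0,qd_1,rd_2)\le N_0(abc)-1$, where $N_0(abc)$ denotes the number of distinct roots of $abc$; pairwise coprimeness yields $N_0(abc)\le d_0+d_1+d_2$. Writing $D$ for the maximum on the left, which is positive because $\tau$ is non-constant, and using $pd_0,qd_1,rd_2\le D$, i.e.\ $d_0\le D/p$, $d_1\le D/q$ and $d_2\le D/r$, I would conclude $D\le D(1/p+1/q+1/r)-1$, hence $1/p+1/q+1/r>1$, so $(p,q,r)$ is platonic. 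This is Halphen's theorem, here obtained through \emph{abc}.

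For the ``if'' direction I would argue that, since $p,q,r\ge 2$ for a Pham--Brieskorn surface, every platonic triple meets the hypotheses of Schwartz's theorem~\cite{Sch}, which produces non-constant coprime polynomials solving equation~(\ref{PB}); by the equivalence of the first paragraph the corresponding curve avoids the origin. (Were an exponent equal to $1$ admitted, say $r=1$, then $z_2=-(z_0^p+z_1^q)$ is unconstrained and $x\mapsto(x,1,-(x^p+1))$ already gives a non-constant curve missing the origin, so the conclusion would persist.) The main obstacle I anticipate is not the degree bookkeeping but the reliance on Schwartz's explicit solutions for the sporadic triples $(5,3,2)$, $(4,3,2)$, $(3,3,2)$, whose construction is genuinely non-trivial and tied to the binary polyhedral groups; apart from that, the only delicate point is checking that the inequality above survives when some $z_i$ is constant, which it does since the argument uses only $pd_i\le D$ rather than equality of the three leading degrees.
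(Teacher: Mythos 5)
Your proof is correct. Note, however, that the paper does not actually prove Theorem~\ref{T2}: it is quoted as a known result with references to Schwartz, Halphen and \cite{FZ}, and is then \emph{used} as an ingredient in the proof of its generalization, Theorem~\ref{SH}. Your argument supplies exactly the expected self-contained proof: the observation that missing the origin forces pairwise coprimeness (via the defining equation), the Mason--Stothers bound $D\le D(1/p+1/q+1/r)-1$ with $D=\max(pd_0,qd_1,rd_2)>0$ for necessity, and Schwartz's explicit solutions for sufficiency. This is the same strategy the paper deploys for Theorem~\ref{SH}, with one cosmetic difference: there the bookkeeping is done with the numbers $m_{ij}$ of distinct roots of the individual factors (summing the three abc-inequalities and then running a case analysis on the smallest exponents), because the monomials $T_i^{l_i}$ involve several variables; your degree-based version is the cleaner specialization available in the surface case. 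You are also right that the genuinely non-elementary content sits in the sporadic platonic triples $(5,3,2)$, $(4,3,2)$, $(3,3,2)$, for which both you and the paper must fall back on the classical explicit constructions; your parenthetical about an exponent equal to $1$ is harmless but unnecessary, since equation~(\ref{PB}) assumes $p,q,r\ge 2$.
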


There are several ways to generalize the theory of  Pham-Brieskorn surfaces to higher dimension. One way is to consider Pham-Brieskorn hypersurfaces 
$$
V(z_0^{p_0}+z_1^{p_1}+\ldots+z_m^{p_m})\subseteq\CC^{m+1},
$$
see \cite[Example~2.21]{FZ} and references therein for related results. 

\smallskip

In this paper we consider trinomial hypersurfaces of arbitrary dimension. Trinomial relations in many variables arise naturally in connection with torus actions of complexity one, multigraded algebras and Cox rings of algebraic varieties, see~\cite{ADHL,HH,HHS,HS,HW,HW2}.

 In Section~\ref{sec2} we introduce two types of trinomial affine hypersurfaces, discuss their geometric properties and define a torus action of complexity one for hypersurfaces of each type.  
Theorems~\ref{Hor} and~\ref{Hor1} are generalizations of Theorem~\ref{T1} to the case of trinomial hypersurfaces. It turns out that for hypersurfaces of Type~2 rationality is equivalent to existence of a non-trivial polynomial curve, while for Type~1 this is not the case. 

In Section~\ref{sec5} we define Schwartz-Halphen curves on trinomial hypersurfaces and study basic properties of such curves. An extension of Theorem~\ref{T2} to the hypersurface case is given in Theorem~\ref{SH}.
As one may expect, a significant role in our argumets plays the Mason-Stothers abc-Theorem.

\smallskip

The author is grateful to J\"urgen Hausen and Milena Wrobel for useful discussions.

\section{Preliminaries}
\label{sec2} 

In this section we introduce two types of trinomials over the field $\CC$ of complex numbers, cf.~\cite{HW},~\cite{HW2}.  

\smallskip

{\it Type~1.}\ We fix positive integers $n_1,n_2$ and let $n=n_1+n_2$. For each $i=1,2$, we take a tuple $l_i\in\ZZ_{>0}^{n_i}$ and define a monomial
$$
T_i^{l_i}:=T_{i1}^{l_{i1}}\ldots T_{in_i}^{l_{in_i}}\in\CC[T_{ij}; \ i=1,2, \, j=1,\ldots,n_i].
$$
By a \emph{trinomial of Type~1} we mean a polynomial of the form $T_1^{l_1}+T_2^{l_2}+1$. A
\emph{trinomial hypersurface of Type~1} is the zero set

$$
X=V(T_1^{l_1}+T_2^{l_2}+1)\subseteq\CC^n.
$$

It is easy to check that $X$ is an irreducible smooth affine variety of dimension $n-1$.

\smallskip

{\it Type~2.}\ Fix positive integers $n_0,n_1,n_2$ and let $n=n_0+n_1+n_2$. For each $i=0,1,2$, fix a tuple $l_i\in\ZZ_{>0}^{n_i}$ and define a monomial
$$
T_i^{l_i}:=T_{i1}^{l_{i1}}\ldots T_{in_i}^{l_{in_i}}\in\CC[T_{ij}; \ i=0,1,2, \, j=1,\ldots,n_i].
$$
By \emph{a trinomial of Type~2} we mean a polynomial of the form $T_0^{l_0}+T_1^{l_1}+T_2^{l_2}$. A \emph{trinomial hypersurface of Type~2} is

$$
X=V(T_0^{l_0}+T_1^{l_1}+T_2^{l_2})\subseteq\CC^n.
$$

One can check that $X$ is an irreducible normal affine variety of dimension $n-1$. Clearly, every trinomial surface of Type~2 is either the Pham-Brieskorn surface $V_{p,q,r}$ or is isomorphic to the affine plane $\CC^2$.

\smallskip

The following simple lemma describes the singular locus of $X$.

\begin{lemma} \label{aa}
A point  $(t_{01},\ldots,t_{2n_2})$ on a trinomial hypersurface $X$ of Type~2 is singular if and only if for every $i=0,1,2$  either there exist  $1\le j<k\le n_i$ with $t_{ij}=t_{ik}=0$, or we have $t_{ij}=0$ for some $1\le j\le n_i$ with $l_{ij}\ge 2$.
\end{lemma}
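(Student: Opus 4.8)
The plan is to apply the Jacobian criterion. Since the defining trinomial $f=T_0^{l_0}+T_1^{l_1}+T_2^{l_2}$ is irreducible, the hypersurface $X=V(f)$ is reduced and integral, so a point $t=(t_{01},\ldots,t_{2n_2})\in X$ is singular if and only if the gradient of $f$ vanishes at $t$, that is $\partial f/\partial T_{ij}(t)=0$ for all admissible $i,j$. The first step is therefore to record the partial derivatives. As $f$ involves the variable $T_{ij}$ only through the single monomial $T_i^{l_i}$, I would compute
$$
\frac{\partial f}{\partial T_{ij}}=l_{ij}\,T_{ij}^{l_{ij}-1}\prod_{k\ne j}T_{ik}^{l_{ik}},
$$
an expression supported entirely on the $i$-th block of variables. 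Consequently the condition ``all partials vanish'' decouples into three independent conditions, one for each $i\in\{0,1,2\}$, and it suffices to characterize, for a fixed block $i$, when every derivative $\partial f/\partial T_{ij}(t)$ with this $i$ is zero.

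The second step is a short case analysis on the zero pattern of the block $(t_{i1},\ldots,t_{in_i})$. If none of these coordinates vanishes then, using $\operatorname{char}\CC=0$ so that $l_{ij}\ne 0$, each $\partial f/\partial T_{ij}(t)$ is a nonzero product, and $t$ is a smooth point. If at least two coordinates vanish, say $t_{ij_1}=t_{ij_2}=0$, then for every $j$ the product $\prod_{k\ne j}$ still contains one of these two zero factors, so all derivatives in this block vanish irrespective of the exponents. If exactly one coordinate $t_{ij_0}$ vanishes, then the derivatives with respect to $T_{ij'}$ for $j'\ne j_0$ vanish automatically, since their product $\prod_{k\ne j'}$ contains the zero factor $t_{ij_0}$; the remaining derivative equals $l_{ij_0}\,t_{ij_0}^{l_{ij_0}-1}\prod_{k\ne j_0}t_{ik}^{l_{ik}}$ with nonzero tail product, hence it vanishes if and only if $l_{ij_0}\ge 2$.

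Collecting the three cases, all derivatives in block $i$ vanish exactly when either two coordinates of the block are zero, or a single zero coordinate carries an exponent $l_{ij}\ge 2$ --- which is precisely the disjunction stated for that $i$. Intersecting over $i=0,1,2$ yields the asserted description of the singular locus. I expect no genuine obstacle in this argument: the only points demanding care are the justification of the Jacobian criterion in the form ``singular $\iff$ gradient vanishes'' (which rests on $X$ being reduced, guaranteed by irreducibility of $f$) and keeping the case bookkeeping consistent with the ``or'' in the statement, noting in particular that once two coordinates vanish the first alternative holds regardless of the corresponding exponents.
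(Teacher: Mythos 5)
Your proposal is correct and follows the same route as the paper: the paper's proof simply invokes the Jacobian criterion (all partials of the trinomial vanish) and leaves the blockwise case analysis implicit, which you have spelled out accurately.
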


\begin{proof}
A point $x\in X$ is singular if and only if
$\frac{\partial(T_0^{l_0}+T_1^{l_1}+T_2^{l_2})}{\partial T_{ij}}(x)=0$ for all $i=0,1,2$ and
all $1\le j\le n_i$. This implies the assertion.
\end{proof}

Recall that the \emph{complexity} of an effective action $T\times X \to X$ of an algebraic torus $T$ on an irreducible algebraic variety $X$ is defined as $\dim X - \dim T$. Trinomial hypersurfaces of both types are equipped with a torus action of complexity one. Namely,
assume that every variable $T_{ij}$ is an eigenvector of a weight $w_{ij}$ with respect to a $T$-action. Then we have relations
$$
\sum_{j=1}^{n_1} l_{1j}w_{1j}=\sum_{j=1}^{n_2} l_{2j}w_{2j}=0
$$
for Type~1 and relations
$$
\sum_{j=1}^{n_0} l_{0j}w_{0j}=\sum_{j=1}^{n_1} l_{1j}w_{1j}=\sum_{j=1}^{n_2} l_{2j}w_{2j}
$$
for Type~2. There relations define a subgroup in the torus of all invertible diagonal matrices on $\CC^n$ whose connected component is a subtorus $T$ of codimension $2$, and the restricted action $T\times X\to X$ is effective.

For Type~1, the monomials $T_1^{l_1}$ and $T_2^{l_2}$ are non-constant regular invariants of the $T$-action. On the contrary, for a trinomial hypersurface $X$ of Type~2, every $T$-orbit on $X$ contains the origin in its closure, and thus every regular $T$-invariant is a constant.

\begin{example}
On the hypersurface
$$
X=V(T_{01}^2T_{02}^4+T_{11}^6+T_{21}^8)\subseteq\CC^4
$$
we have a $(\CC^{\times})^2$-action given by
$$
(t_1,t_2)\cdot(T_{01},T_{02},T_{11},T_{21})=(t_1^{12}t_2^{-2}T_{01},t_2T_{02},t_1^4T_{11},t_1^3T_{21}). 
$$
\end{example}

\section{Horizontal curves on trinomial hypersurfaces of Type~2}
\label{sec3} 

\begin{definition}
A \emph{polynomial curve} on an algebraic variety $X$ is a regular non-constant morphism
$\tau\colon\CC\to X$.
\end{definition}

Assume that a variety $X$ is affine and carries an action $T\times X\to X$ of an algebraic torus~$T$. Every
one-parameter subgroup $\gamma\colon\CC^{\times}\to T$ and every point $x_1\in X$ with a non-closed orbit $\gamma(\CC^{\times})\cdot x_1$ define a polynomial curve
$$
\tau\colon\CC\to X, \quad \tau(t)=\gamma(t)\cdot x_1 \quad \text{for all} \quad t\ne 0 \quad
\text{and} \quad \tau(0)=x_0,
$$
where $x_0$ is the limit point of the non-closed orbit $\gamma(\CC^{\times})\cdot x_1$. Our aim now is to define and to study a class of polynomial curves which in a sense is complementary to this class of curves. The following definition is a special case of the standard notion of a quasisection; see~\cite[Section~2.5]{PV}.

\begin{definition}
A polynomial curve $\tau\colon\CC\to X$ on an irreducible $T$-variety $X$ is called \emph{horizontal} if there exists a $T$-invariant open subset $W$ in $X$ such that $\tau(\CC)$ intersects all $T$-orbits on $W$.
\end{definition}

In the case of the Pham-Brieskorn surface $X=V_{p,q,r}$, every polynomial curve on $X$ is either horizontal or a closure of a $T$-orbit on $X$. Curves of the latter type correspond to trivial polynomial solutions mentioned in the Introduction.

\begin{lemma} \label{l01} 
A polynomial curve $\tau\colon\CC\to X$ on a trinomial hypersurface of Type~2 is horizontal if and only if the rational function $T_0^{l_0}/T_1^{l_1}$ is non-constant along
the image $\tau(\CC)$.
\end{lemma}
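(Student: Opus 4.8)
The plan is to characterize horizontality via the torus action and to identify the right $T$-invariant rational function on $X$. Recall that for Type~2 the torus $T$ acts with complexity one, so $\dim X - \dim T = 1$, and the field of $T$-invariant rational functions $\CC(X)^T$ has transcendence degree one over $\CC$. A horizontal curve is, by definition, one whose image meets all $T$-orbits in some invariant open set $W$; equivalently, the restriction of the quotient map $\pi\colon X \dashrightarrow X /\!\!/ T$ to $\tau(\CC)$ should be dominant onto the (one-dimensional) quotient. The monomials $T_0^{l_0}, T_1^{l_1}, T_2^{l_2}$ satisfy the defining relation $T_0^{l_0}+T_1^{l_1}+T_2^{l_2}=0$ on $X$, and by the weight relations $\sum_j l_{ij}w_{ij}$ being equal for $i=0,1,2$, each monomial has the same $T$-weight. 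Hence their pairwise ratios, in particular $f:=T_0^{l_0}/T_1^{l_1}$, are $T$-invariant rational functions on $X$. The first step is therefore to verify that $f$ generates $\CC(X)^T$ (up to finite extension), so that $f$ separates $T$-orbits on a suitable invariant open subset $W$ where $T_1^{l_1}\ne 0$.

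Granting this, the proof splits into the two implications. For the forward direction, suppose $\tau$ is horizontal, so $\tau(\CC)$ meets all $T$-orbits in $W$. Since distinct $T$-orbits in $W$ take distinct values of the orbit-separating invariant $f$, and $\tau(\CC)$ hits infinitely many orbits, the function $f$ must take infinitely many values along $\tau(\CC)$; thus $T_0^{l_0}/T_1^{l_1}$ is non-constant along the image. For the converse, suppose $f = T_0^{l_0}/T_1^{l_1}$ is non-constant along $\tau(\CC)$. Then $f\circ\tau$ is a non-constant rational function of the parameter, so it takes infinitely many values; since on $W$ the value of $f$ determines the $T$-orbit, the image $\tau(\CC)$ meets infinitely many — indeed a dense set of — the orbits, and one checks this forces it to meet \emph{all} orbits of a possibly smaller invariant open set, giving horizontality.

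The main obstacle I expect is the precise orbit-separation statement: showing that $f$ (or the pair of ratios) actually separates generic $T$-orbits on an explicit invariant open $W$, rather than merely being invariant. This requires using the complexity-one structure — that the generic fibers of $\pi$ are single $T$-orbits — together with the specific form of the weights $w_{ij}$ coming from the relations $\sum_{j} l_{0j}w_{0j}=\sum_{j} l_{1j}w_{1j}=\sum_{j} l_{2j}w_{2j}$. A second, more technical point is the passage in the converse from ``meets infinitely many orbits'' to the literal definitional requirement that $\tau(\CC)$ meet \emph{all} orbits of some invariant open $W$: one must shrink $W$ appropriately (removing the finitely many orbits missed, together with the ``bad'' loci such as $T_1^{l_1}=0$) while keeping it $T$-invariant and open. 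I would handle this by taking $W$ to be the $T$-saturation of the orbits actually hit, using that the image of a dominant map to the one-dimensional quotient is cofinite.
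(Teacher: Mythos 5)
Your overall strategy is the right one and the converse direction essentially matches the paper's: there the author restricts to the open set $W_0$ where all coordinates are nonzero, notes that the $T$-orbits in $W_0$ form a one-parameter family of codimension-one orbits (trivial stabilizers), and concludes from the curve not lying in a single orbit that it meets generic orbits; this is your saturation argument. However, as written your proof has a genuine gap: everything after \emph{``Granting this''} is conditional on the claim that $f=T_0^{l_0}/T_1^{l_1}$ separates generic $T$-orbits, and you never establish it --- you only flag it as the expected obstacle. This claim does require work (one must check that the exponent vector $l_0-l_1$, together with the relation $T_0^{l_0}/T_2^{l_2}+T_1^{l_1}/T_2^{l_2}+1=0$ on $X$, generates a finite-index sublattice of the rank-two lattice of $T$-invariant Laurent monomials, so that $f$ determines the orbit up to finite ambiguity). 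The paper avoids the issue entirely in the ``only if'' direction: if the ratio is constant, say $T_0^{l_0}=\lambda T_1^{l_1}$ along the curve, then $\tau(\CC)$ lies in the proper closed $T$-invariant subset $V(T_0^{l_0}-\lambda T_1^{l_1})$, which already precludes horizontality --- no orbit separation needed. Note also that for your forward direction you additionally need that $f$ is non-constant on $X$ itself (i.e.\ that $V(T_0^{l_0}-\lambda T_1^{l_1})\cap X$ is proper for every $\lambda$), which you do not mention.

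A second omission: you never treat the degenerate case where some coordinate $T_{ij}$, hence $T_1^{l_1}$ or $T_0^{l_0}$, vanishes identically along the curve, so that the ratio is not a well-defined non-constant function. The paper disposes of this first: such a curve lies in $V(T_{ij})$, a proper closed invariant subset, so it is not horizontal, and the two remaining monomials are then proportional along the curve, consistent with the ``constant ratio'' alternative. Without this step the equivalence is not even well posed for all curves. Both gaps are fillable, but as submitted the argument is incomplete precisely at the points your own proposal identifies as the obstacles.
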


\begin{proof}
Assume that some coordinate function $T_{ij}$ vanishes on $\tau(\CC)$. Then the monomial $T_i^{l_i}$ is zero and the two remaining monomials in the trinomial relation are proportional
along~$\tau(\CC)$. At the same time, the image $\tau(\CC)$ is contained in a proper closed $T$-invariant subset $V(T_{ij})$ and thus the curve can not intersect generic $T$-orbits on $X$.

Hence we may assume that every coordinate $T_{ij}$ has finitely many zeroes on $\tau(\CC)$. 
If $T_0^{l_0}=\lambda T_1^{l_1}$ on $\tau(\CC)$ for some $\lambda\in\CC$, then again $\tau(\CC)$ is contained in a proper closed $T$-invariant subset $V(T_0^{l_0}-\lambda T_1^{l_1})$, and the curve can not be horizontal. 

Conversely, assume that the function $T_0^{l_0}/T_1^{l_1}$ is non-constant along $\tau(\CC)$. 
Let us consider an open subset $W_0$ in $X$ consisting of all points where each coordinate $T_{ij}$ is nonzero. Since the stabilizer in $T$ of a point on $W_0$ is trivial, all $T$-orbits in $W_0$ form a one-parameter family of orbits of codimension $1$ in $X$. The intersection of the curve $\tau(\CC)$ with $W_0$ is not contained in a $T$-orbit and thus it intersects generic $T$-orbits in $W_0$. This implies that the curve is horizontal.
\end{proof}

\begin{remark}
One may obtain examples of horizontal polynomial curves on a trinomial hypersurface $X$ as generic orbits of a regular action $\GG_a\times X\to X$, where $\GG_a$ is the additive group of the ground field $\CC$ and the action comes from a homogeneous locally nilpotent derivation of the algebra $\CC[X]$, cf.~\cite[Lemma~2]{Ar}.
\end{remark}

For a trinomial $T_0^{l_0}+T_1^{l_1}+T_2^{l_2}$ of Type~2, we let $d_i=\text{gcd}(l_{i1},\ldots,l_{in_i})$.

\begin{theorem} \label{Hor}
Let $X$ be a trinomial hypersurface of Type~2. The following conditions are equivalent.
\begin{enumerate}
\item[(i)]
The hypersurface $X$ is rational.
\item[(ii)]
The hypersurface $X$ admits a horizontal polynomial curve.
\item[(iii)]
Either at least one of the numbers $d_0,d_1,d_2$ is coprime with the others, or
$\text{gcd}(d_0,d_1)=\text{gcd}(d_0,d_2)= \text{gcd}(d_1,d_2)=2$.
\end{enumerate}
\end{theorem}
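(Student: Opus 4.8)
The plan is to reduce every assertion to the corresponding statement for the Pham--Brieskorn surface $V_{d_0,d_1,d_2}$ and then to invoke Theorem~\ref{T1}. For each $i$ I would write $l_{ij}=d_im_{ij}$, so that $\gcd(m_{i1},\dots,m_{in_i})=1$, and set $S_i:=T_{i1}^{m_{i1}}\cdots T_{in_i}^{m_{in_i}}$; then $T_i^{l_i}=S_i^{d_i}$ and the trinomial becomes $S_0^{d_0}+S_1^{d_1}+S_2^{d_2}$. The structural observation I would rely on is that, because each exponent vector $(m_{i1},\dots,m_{in_i})$ is primitive, it is part of a $\ZZ$-basis of $\ZZ^{n_i}$; the associated matrix in $\GL_{n_i}(\ZZ)$ induces an automorphism of the torus $(\CC^\times)^{n_i}$ carrying one torus coordinate onto $S_i$. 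Performing this monomial change of coordinates in each of the three blocks identifies $X\cap(\CC^\times)^n$ with $(V_{d_0,d_1,d_2}\cap(\CC^\times)^3)\times(\CC^\times)^{n-3}$, so that $X$ is birational to $V_{d_0,d_1,d_2}\times(\CC^\times)^{n-3}$.

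This birational description is what I would use to settle (i)$\Leftrightarrow$(iii). Since a torus is rational, $X$ is rational as soon as $V_{d_0,d_1,d_2}$ is; conversely, rationality of $X$ forces the surface $V_{d_0,d_1,d_2}$ to be stably rational, and a stably rational surface is rational. Thus (i) is equivalent to rationality of $V_{d_0,d_1,d_2}$, which by the rationality criterion for Pham--Brieskorn surfaces recalled after Theorem~\ref{T1} (with $d_0,d_1,d_2$ in place of $p,q,r$) is exactly condition~(iii).

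For (ii)$\Rightarrow$(iii) I would take a horizontal polynomial curve $\tau\colon\CC\to X$ and set $w_i(t):=S_i(\tau(t))\in\CC[t]$. Then $T_i^{l_i}(\tau(t))=w_i(t)^{d_i}$, so $(w_0,w_1,w_2)$ is a polynomial curve on $V_{d_0,d_1,d_2}$; by Lemma~\ref{l01} the ratio $w_0^{d_0}/w_1^{d_1}=T_0^{l_0}/T_1^{l_1}$ is non-constant, which rules out the trivial solutions described in the Introduction (for which this ratio is constant) and makes the curve non-trivial. Theorem~\ref{T1} then yields condition~(iii).

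The remaining implication (iii)$\Rightarrow$(ii) is the core of the argument and the step I expect to be the main obstacle. By Theorem~\ref{T1}, condition~(iii) provides a non-trivial polynomial curve $(w_0,w_1,w_2)$ on $V_{d_0,d_1,d_2}$, and I must lift it, i.e. produce polynomials $T_{ij}(t)$ with $\prod_j T_{ij}(t)^{m_{ij}}=w_i(t)$. This is not always possible for the given $w_i$, since $1$ need not lie in the numerical semigroup generated by $m_{i1},\dots,m_{in_i}$, so a root of $w_i$ of low multiplicity can be an obstruction. The device that removes it uses the scaling freedom of the equation: for $m=\text{lcm}(d_0,d_1,d_2)$ and any $\phi\in\CC[t]$, the triple $(\phi^{m/d_0}w_0,\phi^{m/d_1}w_1,\phi^{m/d_2}w_2)$ is again a solution, still horizontal because the ratio $w_0^{d_0}/w_1^{d_1}$ is unchanged. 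Choosing $\phi$ to vanish to sufficiently high order at every root of $w_0w_1w_2$, I arrange that in each $\tilde w_i:=\phi^{m/d_i}w_i$ every root has multiplicity at least the Frobenius number of $\langle m_{i1},\dots,m_{in_i}\rangle$. Each such exponent is then a non-negative integer combination of the $m_{ij}$, so distributing the linear factors of $\tilde w_i$ among the variables and splitting the leading constant as a product of $m_{ij}$-th powers yields polynomials $T_{ij}(t)$ with $\prod_j T_{ij}^{m_{ij}}=\tilde w_i$. The resulting map $\tau=(T_{ij})\colon\CC\to X$ satisfies $T_i^{l_i}(\tau)=\tilde w_i^{d_i}$ and $\sum_i\tilde w_i^{d_i}=\phi^m\sum_i w_i^{d_i}=0$, so it lands in $X$ and is horizontal by Lemma~\ref{l01}. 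The two delicate points are this numerical-semigroup representability, handled by the multiplicity-boosting choice of $\phi$, and, for part~(i), the appeal to stable rationality of surfaces.
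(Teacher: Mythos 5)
Your reductions for (i)$\Leftrightarrow$(iii) and (ii)$\Rightarrow$(iii) are sound and genuinely different from the paper's: the paper simply cites \cite[Proposition~5.5]{ABHW} for (i)$\Leftrightarrow$(iii) and proves (ii)$\Rightarrow$(i) via the rational quotient $X\dashrightarrow Y$, whereas you re-derive both from the birational identification of $X$ with $V_{d_0,d_1,d_2}\times(\CC^\times)^{n-3}$ plus Castelnuovo (stably rational surfaces are rational) and the surface criterion of \cite{BK}. Your Frobenius-number lifting in (iii)$\Rightarrow$(ii) is also a valid alternative to the paper's device of passing to the auxiliary surface $z_0^{c_0d_0}+z_1^{c_1d_1}+z_2^{c_2d_2}=0$ and substituting $T_{ij}=z_i^{b_{ij}}$.

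However, there is a genuine gap at the heart of (iii)$\Rightarrow$(ii): you feed into your lifting procedure ``a non-trivial polynomial curve $(w_0,w_1,w_2)$ on $V_{d_0,d_1,d_2}$ provided by Theorem~\ref{T1},'' and then assert the lift is horizontal ``because the ratio $w_0^{d_0}/w_1^{d_1}$ is unchanged.'' But a non-trivial curve in the sense of Theorem~\ref{T1} need not have this ratio non-constant: for example $(z_0,z_1,z_2)=(t,it,0)$ on $V_{2,2,3}$ is not of the trivial form $(\alpha\phi^{3},\beta\phi^{3},\gamma\phi^{2})$, yet $z_2\equiv 0$ and $z_0^2/z_1^2\equiv -1$; its image is a $T$-orbit closure, and no amount of rescaling by $\phi^{m/d_i}$ makes the lift horizontal (Lemma~\ref{l01} fails for it). So what you actually need as input is a \emph{horizontal} polynomial curve on the rational Pham--Brieskorn surface, i.e.\ one with $w_0w_1w_2\neq 0$ and $w_0^{d_0}/w_1^{d_1}$ non-constant, and Theorem~\ref{T1} does not supply that as a black box. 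This is precisely the point where the paper does its real work: it explicitly constructs such curves, first in the case where one exponent is coprime to the other two (the Evyatar-type curve $z_0=l(x)^{uq}(x^r+1)$, $z_1=\epsilon l(x)^{up}(2x^r+1)$, $z_2=l(x)^vx$, horizontal because $x^r+1$ and $x$ are coprime), and then in the considerably more delicate case $\gcd(d_i,d_j)=2$ for all pairs, via the Pythagorean-type identity $(s-m)^2+(2\sqrt{sm}\,)^2=(s+m)^2$ with a carefully chosen constant $\alpha_0$. Your proposal omits this construction entirely, so the implication (iii)$\Rightarrow$(ii) is not established.
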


\begin{proof} Conditions~(i) and~(iii) are equivalent by~\cite[Proposition~5.5]{ABHW}.

\smallskip

Let us prove implication (ii)$\Rightarrow$(i). Assume that the hypersurface $X$ admits a horizontal polynomial curve $\tau$. Consider the rational quotient $\pi\colon X\to Y$, i.e. a rational morphism to an algebraic variety $Y$ with $\CC(Y)=\CC(X)^T$ defined by the inclusion $\CC(X)^T\subseteq\CC(X)$, see~\cite[Section~2.4]{PV} for more details. Then $Y$ is a curve and $\pi$ restricted to $\tau(\CC)$ gives rise to a dominant rational morphism from $\CC$ to $Y$. It shows that the curve $Y$ is rational. On the other hand, the variety $X$ contains an open subset isomorphic to $T\times Y'$, where $Y'$ is a curve birational to $Y$. This proves that the variety $X$ is rational.

\smallskip

We come to implication (iii)$\Rightarrow$(ii). Let us prove first that a rational Pham-Brieskorn surface $V_{p,q,r}=V(z_0^p+z_1^q+z_2^r)$ admits a horizontal polynomial curve. In this part we use a method proposed in~\cite{Ev} and fill a gap in the arguments given there.

Take $\epsilon\in\CC$, $\epsilon^q=-1$. We have
$$
(x^r+1)^p+(\epsilon(2x^r+1))^q+x^rl(x)=0
$$
for some polynomial $l(x)$. Assume first that $\text{gcd}(p,r)=\text{gcd}(q,r)=1$. Then there exist $u,v\in\ZZ_{>0}$ such that $vr-upq=1$. Let us take
$$
z_0=l(x)^{uq}(x^r+1), \quad z_1=\epsilon l(x)^{up}(2x^r+1), \quad z_2=l(x)^vx.
$$
This curve is horizontal because the polynomials $x^r+1$ and $x$ are coprime.

\smallskip

Now assume that $\text{gcd}(p,q)=\text{gcd}(p,r)=\text{gcd}(q,r)=2$ and $p\ge q\ge r$. Then
$p=2p_1$, $q=2q_1$, $r=2r_1$ with pairwise coprime $p_1,q_1,r_1$.

Consider an equation
\begin{equation}
l_0(x)^2w_0(x)^{2p_1}+l_1(x)^2w_1(x)^{2q_1}+l_2(x)^2w_2(x)^{2r_1}=0.
\label{eq1}
\end{equation}

Take positive integers $u_i,v_i$ such that
$$
u_1p_1-v_1q_1r_1=1, \quad u_2q_1-v_2p_1r_1=1, \quad u_3r_1-v_3p_1q_1=1.
$$
The polynomials
\begin{equation}
\begin{split}
z_0=l_0(x)^{u_1}l_1(x)&^{v_2r_1}l_2(x)^{v_3q_1}w_0(x), \quad
z_1=l_0(x)^{v_1r_1}l_1(x)^{u_2}l_2(x)^{v_3p_1}w_1(x), \\
& z_2=l_0(x)^{v_1q_1}l_1(x)^{v_2p_1}l_2(x)^{u_3}w_2(x)
\label{eq2}
\end{split}
\end{equation}
satisfy the equation $z_0^p+z_1^q+z_2^r=0$. Moreover, if $w_0(x)$ has a prime factor that does not appear in $w_1(x)$ and does not divide $l_0(x)l_1(x)l_2(x)$, then we obtain a horizontal curve.
Hence it suffices to find a solution of equation~(\ref{eq1}) that meet the latter condition.

\smallskip

We set $s(x)=\alpha(x^{2r_1}+1)^{p_1}$ with some $\alpha\in\CC$ and $m(x)=s(x)-(x^{2r_1}+2)^{q_1}$. Then
$$
(x^{2r_1}+2)^{2q_1}+4\alpha^2m(x)^2(x^{2r_1}+1)^{2p_1}=(s(x)-m(x))^2+(2s(x)m(x))^2=(s(x)+m(x))^2.
$$
Note that $m(0)=\alpha-2^{q_1}$. So the left hand side with $x=0$ equals
$$
2^{2q_1}+4\alpha^2(\alpha-2^{q_1})^2.
$$
Let $\alpha_0$ be a root of this polynomial. Then we have
\begin{equation}
(2\alpha_0 m(x))^2(x^{2r_1}+1)^{2p_1}+(x^{2r_1}+2)^{2q_1}+l_2(x)^2x^{2r_1}=0
\label{eq3}
\end{equation}
with some polynomial $l_2(x)$. Since $m(x)$ is coprime with both $x^{2r_1}+1$ and $x^{2r_1}+2$,
the polynomial curve coming from~(\ref{eq3}) via~(\ref{eq2}) is horizontal.
This completes the proof in the surface case.

\smallskip

Now we come to the case of a trinomial hypersurface of arbitrary dimension. It is well known that for all sufficiently large positive integers $c_i$ there exist positive integers $b_{i1},\ldots,b_{in_i}$ such that
$$
b_{i1}l_{i1}+\ldots+b_{in_i}l_{in_i}=c_id_i.
$$
We take sufficiently large pairwise coprime $c_0,c_1,c_2$ that are coprime with $d_0,d_1,d_2$,  find the corresponding $b_{ij}$, substitute $T_{ij}=z_i^{b_{ij}}$, and obtain
\begin{equation}
z_0^{c_0d_0}+z_1^{c_1d_1}+z_2^{c_2d_2}=0.
\label{h1}
\end{equation}
If the hypersurface~X is rational, surface~(\ref{h1}) is rational as well. We take a horizontal polynomial curve on this surface
$$
z_0=\phi_0(x), \quad z_1=\phi_1(x), \quad z_2=\phi_2(x).
$$
With $T_{ij}=\phi_i(x)^{b_{ij}}$ we obtain a polynomial curve on $X$. Let us check that this curve is horizontal. The rational invariants $T_i^{l_i}/T_j^{l_j}$ on this curve are equal to
$$
\frac{\phi_i(x)^{c_id_i}}{\phi_j(x)^{c_jd_j}}.
$$
This fraction is non-constant for some $i,j$ just because the curve on surface~(\ref{h1}) is horizontal. This completes the proof of Theorem~\ref{Hor}.
\end{proof}

\begin{remark}
By~\cite[Theorem~1.1(ii)]{HH}, a trinomial hypersurface of Type~2 is a factorial affine variety if and only if the numbers $d_0,d_1,d_2$ are pairwise coprime. In particular, every factorial trinomial hypersurface of Type~2 satisfies the conditions of Theorem~\ref{Hor}.
\end{remark}

\begin{remark}
In~\cite{AZ} we show that every irreducible simply connected curve on a toric affine surface $X$ is an orbit closure of an action $\GG_m\times X\to X$ of the multiplication group $\GG_m$ of the ground field. 
The results of this paper characterize existence of certain polynomial curves on affine hypersurfaces with a torus action of complexity one
\end{remark}

\begin{problem}
Let $X$ be a normal rational affine variety without non-constant invertible functions equipped
with a torus action $T\times X\to X$ of complexity one such that $\CC[X]^T=\CC$. Does $X$ admit a horizontal polynomial curve?
\end{problem}

One possible approach to this problem is to use Cox rings and total coordinate spaces, see~\cite[Section~1.6]{ADHL} for details. Namely, under our assumptions the variety $X$ has a finitely generated divisor class group $\Cl(X)$ and a finitely generated Cox ring $R(X)$. Moreover, the ring $R(X)$ is the quotient of a polynomial ring by an ideal generated by trinomials~\cite[Theorem~1.8]{HW}, and the total coordinate space $\overline{X}=\Spec(R(X))$ carries a torus action of complexity one. So one may try to construct a horizontal polynomial curve on $\overline{X}$ and then to project it to a horizontal polynomial curve on $X$ via the quotient morphism $\overline{X}\to X$. The difficulty with this approach is that the total coordinate space need not be rational, see~\cite[Example~5.12]{ABHW} and the following example.

\begin{example}
Consider the surface $V_{3,3,3}=V(z_0^3+z_1^3+z_2^3)$ in $\CC^3$. This surface is not rational and does not admit a horizontal polynomial curve. On the other hand, the quotient $X$ of the surface $V_{3,3,3}$ by the group $H=\ZZ/3\ZZ\times\ZZ/3\ZZ$ acting as
$$
(z_0,z_1,z_2)\mapsto (\epsilon_0z_0,\epsilon_1z_1,\epsilon_0\epsilon_1z_2), \quad
\epsilon_0^3=\epsilon_1^3=1,
$$
is a rational $\GG_m$-surface, see~\cite[Theorem~1.7]{HW}. One can check that the algebra $\CC[X]$ is generated by the functions
$$
a=z_0^3, \quad b=z_1^3, \quad c=z_2^3, \quad d=z_0z_1z_2^2, \quad e=z_0^2z_1^2z_2,
$$
and the formulas
$$
a=-(x+1)x^2, \quad b=-(x+1)x^3, \quad c=(x+1)^2x^2, \quad d=(x+1)^2x^3, \quad e=(x+1)^2x^4
$$
define a horizontal polynomial curve on $X$.
\end{example}

\section{Horizontal curves on trinomial hypersurfaces of Type~1}
\label{sec4}

In this section we study existence of horizontal polynomial curves on trinomial hypersurfaces of Type~1. For this we need the following important result, see \cite{St}, \cite{Mas} or \cite[Theorem~1.8]{Pr}. Given a polynomial $p(x)\in\CC[x]$, denote by $d_0(p(x))$ the number of its distinct roots (without counting multiplicities).

\medskip

{\bf The Mason-Stothers abc-Theorem.} \ Let $a(x)$, $b(x)$, $c(x)$ be three coprime polynomials, not all three constant. Assume that $a(x)+b(x)+c(x)=0$.
Then we have
$$
\max\{\deg a(x), \deg b(x), \deg c(x)\}\le d_0(a(x)b(x)c(x))-1.
$$

\medskip

Let $X$ be a trinomial hypersurface of Type~1.

\begin{theorem}\label{Hor1}
The following conditions are equivalent.
\begin{enumerate}
\item[(i)]
The hypersurface $X$ admits a horizontal polynomial curve.
\item[(ii)]
We have $l_{ij}=1$ for some $i=1,2$ and some $j=1,\ldots,n_i$.
\end{enumerate}
\end{theorem}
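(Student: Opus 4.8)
The plan is to reduce both implications to the single polynomial identity coming from the trinomial relation and then to play it against the abc-Theorem. First I would record the analogue of Lemma~\ref{l01} for Type~1: a polynomial curve $\tau$ is horizontal if and only if the invariant $T_1^{l_1}$ is non-constant along $\tau(\CC)$. One direction is immediate from the definition — if $\tau$ meets every $T$-orbit on a dense invariant open set $W$, then $T_1^{l_1}\circ\tau$ attains all the (infinitely many) values that $T_1^{l_1}$ takes on $W$, since $T_1^{l_1}$ is constant along orbits; hence it is non-constant. The converse is the verbatim orbit-counting argument of Lemma~\ref{l01} applied to the open set $W_0$ where all coordinates are nonzero, on which $T$ acts with trivial stabilizers so that the orbits form a one-parameter family of codimension~$1$. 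Writing $a(x)=T_1^{l_1}(\tau(x))=\prod_j T_{1j}(x)^{l_{1j}}$ and $b(x)=T_2^{l_2}(\tau(x))=\prod_j T_{2j}(x)^{l_{2j}}$, the trinomial relation becomes the polynomial identity $a(x)+b(x)+1=0$, and horizontality of $\tau$ is equivalent to $a(x)$ being non-constant.

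For (ii)$\Rightarrow$(i) I would exhibit an explicit curve. Renumbering the two groups and the variables, assume $l_{11}=1$. Put $T_{21}(x)=x$ and $T_{2j}(x)=1$ for $j\ge 2$, so that $b(x)=x^{l_{21}}$, put $T_{1j}(x)=1$ for $j\ge 2$, and solve the relation for the remaining coordinate: $T_{11}(x)=-1-x^{l_{21}}$, which is a genuine polynomial precisely because $l_{11}=1$. Then $a(x)=T_{11}(x)=-1-x^{l_{21}}$ is non-constant, so by the criterion above the curve is horizontal.

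The content is in (i)$\Rightarrow$(ii), which I would prove by contraposition: assuming $l_{ij}\ge 2$ for all $i,j$, I show that $a(x)$ is forced to be constant, so no horizontal curve exists. Suppose $a(x)$ is non-constant. Any common factor of $a$ and $b$ divides $a+b=-1$ and is therefore constant, so $a,b,1$ are coprime and not all constant; moreover $\deg a=\deg b$, since a strict inequality between the degrees would make $a+b$ non-constant. Applying the Mason--Stothers abc-Theorem to $a(x)+b(x)+1=0$ gives
$$
\deg a \le d_0(a(x)b(x))-1 = d_0(a(x))+d_0(b(x))-1,
$$
where the last equality uses coprimality of $a$ and $b$. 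The key estimate is that all exponents being $\ge 2$ forces each monomial to have few distinct roots: since the distinct roots of $a=\prod_j T_{1j}^{l_{1j}}$ are exactly those of its factors,
$$
d_0(a)\le \sum_j \deg T_{1j}\le \tfrac12\sum_j l_{1j}\deg T_{1j}=\tfrac12\deg a,
$$
and likewise $d_0(b)\le\tfrac12\deg b$. Substituting into the abc-bound and using $\deg a=\deg b$ yields $\deg a\le \deg a-1$, a contradiction. Hence $a(x)$ is constant, as claimed.

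The main obstacle, and the only place where real care is needed, is the estimate $d_0\le\tfrac12\deg$: it is the quantitative incarnation of the hypothesis $l_{ij}\ge 2$ and is exactly what makes the abc-bound self-contradictory. Everything else is bookkeeping, though one should note that no coordinate $T_{ij}$ can vanish identically once $T_1^{l_1}$ is non-constant (as $a\equiv 0$ or $b\equiv 0$ would render $T_1^{l_1}$ constant), so that the orbit criterion genuinely applies.
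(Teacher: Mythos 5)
Your proof is correct and follows essentially the same route as the paper: the same explicit curve for (ii)$\Rightarrow$(i), and the Mason--Stothers abc-Theorem applied to the identity $a(x)+b(x)+1=0$ for (i)$\Rightarrow$(ii). The only differences are cosmetic --- you make the horizontality criterion for Type~1 explicit (the paper leaves it implicit) and derive the contradiction from $d_0(a)\le\tfrac12\deg a$, $d_0(b)\le\tfrac12\deg b$ together with $\deg a=\deg b$, whereas the paper sums the two abc-inequalities written in terms of the root counts $m_{ij}$; both amount to the same estimate.
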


\begin{proof}
We begin with implication (ii)$\Rightarrow$(i). Renumbering, we may assume that $l_{11}=1$.
Then we let
$$
T_{11}=-x^{l_{21}}-1, \quad T_{12}=\ldots=T_{1n_1}=1, \quad
T_{21}=x, \quad T_{22}=\ldots=T_{2n_2}=1.
$$
This gives a horizontal polynomial curve on $X$.

\smallskip

We come to implication (i)$\Rightarrow$(ii). Let $T_{ij}(x)$ be a horizontal polynomial curve
on $X$. We let
$$
a(x)=T_{11}^{l_{11}}(x)\ldots T_{1n_1}^{l_{1n_1}}(x), \quad
b(x)=T_{21}^{l_{21}}(x)\ldots T_{2n_2}^{l_{2n_2}}(x), \quad c(x)=1.
$$
Denote by $m_{ij}$ the number of distinct roots of the polynomial $T_{ij}(x)$. By the
Mason--Stothers abc-Theorem, we have
$$
m_{11}l_{11}+\ldots+m_{1n_1}l_{1n_1}\le\deg a(x)\le d_0(a(x)b(x))-1\le
m_{11}+\ldots+m_{1n_1}+m_{21}+\ldots+m_{2n_2}-1
$$
and, similarly,
$$
m_{21}l_{21}+\ldots+m_{2n_2}l_{2n_2}\le
m_{11}+\ldots+m_{1n_1}+m_{21}+\ldots+m_{2n_2}-1.
$$
Summing up these two inequalities, we obtain
$$
m_{11}(l_{11}-2)+\ldots+m_{1n_1}(l_{1n_1}-2)+m_{21}(l_{21}-2)+\ldots+m_{2n_2}(l_{2n_2}-2)
\le -2.
$$
If all $l_{ij}\ge 2$, we come to a contradiction.
\end{proof}

\begin{remark}
Consider a trinomial hypersurface $X$ of Type~1 and let again $d_i=\text{gcd}(l_{i1},\ldots,l_{in_i})$. By~\cite[Corollary~3.5]{HW2}, the hypersurface $X$ is rational if and only if either at least one of $d_1,d_2$ equals 1, or $d_1=d_2=2$. Theorem~\ref{Hor1} shows that not every rational trinomial hypersurface of Type~1 admits a horizontal polynomial curve. Moreover, by \cite[Proposition~2.8]{HW}, a trinomial hypersurface of Type~1 is factorial if and only if either $n_il_{i1}=1$
for some $i=1,2$, or $d_1=d_2=1$. This shows that not every factorial trinomial hypersurface of Type~1 admits a horizontal polynomial curve. 
\end{remark}

\section{Schwartz-Halphen curves and platonic triples}
\label{sec5}

We keep the notation of the previous sections. For a polynomial curve $\tau\colon\CC\to X$, $\tau(x)=(T_{ij}(x))$, we let
$$
T_i^{l_i}(x):=T_{i1}(x)^{l_{i1}}\ldots\,T_{in_i}(x)^{l_{in_i}}.
$$

\begin{definition} A polynomial curve $\tau\colon\CC\to X$ on a trinomial hypersurface of Type~2
is called a \emph{Schwartz-Halphen curve} (an \emph{SH-curve} for short) if the polynomials $T_0^{l_0}(x), T_1^{l_1}(x), T_2^{l_2}(x)$ are coprime.
\end{definition}

In the case of a polynomial curve on the Pham-Brieskorn surface $V_{p,q,r}$, this condition means that the curve does not pass through the origin. 

\begin{lemma}
Any SH-curve on a trinomial hypersurface $X$ of Type~2 is horizontal. 
\end{lemma}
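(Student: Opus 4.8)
The plan is to reduce the claim to the characterization of horizontal curves already established in Lemma~\ref{l01}. That lemma says a polynomial curve $\tau\colon\CC\to X$ is horizontal if and only if the rational function $T_0^{l_0}/T_1^{l_1}$ is non-constant along $\tau(\CC)$. So it suffices to show that if the three polynomials $T_0^{l_0}(x)$, $T_1^{l_1}(x)$, $T_2^{l_2}(x)$ are coprime, then the ratio $T_0^{l_0}(x)/T_1^{l_1}(x)$ is a non-constant function of $x$.

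First I would argue by contradiction: suppose $\tau$ is an SH-curve but the ratio is constant, say $T_0^{l_0}(x)=\lambda\,T_1^{l_1}(x)$ for some $\lambda\in\CC$. The trinomial relation $T_0^{l_0}(x)+T_1^{l_1}(x)+T_2^{l_2}(x)=0$ then forces $T_2^{l_2}(x)=-(1+\lambda)T_1^{l_1}(x)$, so all three polynomials become scalar multiples of the single polynomial $T_1^{l_1}(x)$. I would then split into two cases according to whether $\lambda=0$, $\lambda=-1$, or neither. If $\lambda\notin\{0,-1\}$, all three polynomials are nonzero constant multiples of $T_1^{l_1}(x)$, hence share every root of $T_1^{l_1}(x)$; coprimality then forces $T_1^{l_1}(x)$ to be a nonzero constant, which makes $T_0^{l_0}(x)$ and $T_2^{l_2}(x)$ constant too. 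The degenerate cases $\lambda=0$ or $\lambda=-1$ are similar, forcing one of the monomials to vanish identically and the remaining two to be proportional, again collapsing to constants after invoking coprimality.

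The remaining point is that once all three monomials $T_i^{l_i}(x)$ are constant, the curve $\tau$ must itself be constant, contradicting that $\tau$ is a polynomial curve (which by definition is non-constant). Here I would note that each $l_i\in\ZZ_{>0}^{n_i}$ has strictly positive entries, so $T_i^{l_i}(x)$ constant together with the trinomial relation pins down the orbit; more carefully, since each coordinate appears with positive exponent, a constant value of every monomial $T_i^{l_i}(x)$ confines $\tau(\CC)$ to a fibre of the quotient map, which is a union of finitely many $T$-orbits, forcing $\tau$ into a single orbit and hence making the image at most one-dimensional but actually constant once the monomials are fixed. The main obstacle I expect is precisely this last bookkeeping step: translating ``all three monomials are constant along $\tau$'' into ``$\tau$ is constant,'' which requires care because the individual coordinate functions $T_{ij}(x)$ need not be constant even when their monomial product is. The cleanest route is to observe that constancy of the ratio $T_0^{l_0}/T_1^{l_1}$ already places $\tau(\CC)$ inside the proper closed $T$-invariant subset $V(T_0^{l_0}-\lambda T_1^{l_1})$, and then invoke the converse direction of Lemma~\ref{l01} directly, which tells us such a curve cannot be horizontal — but since we assumed coprimality rather than horizontality, the contradiction must instead be extracted from coprimality alone, so I would present the argument as: coprimality of the three monomials rules out proportionality of any two of them unless they are all constant, and constancy contradicts $\tau$ being a genuine polynomial curve.
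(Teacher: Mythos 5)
Your reduction to Lemma~\ref{l01} and the core step --- proportionality of $T_0^{l_0}(x)$ and $T_1^{l_1}(x)$ together with coprimality forces the monomials to be constant --- is exactly the paper's argument; the paper's own proof is four sentences long and does precisely this. In the main case $\lambda\notin\{0,-1\}$ your argument closes cleanly, and the ``bookkeeping step'' you worry about is in fact harmless there: if $T_i^{l_i}(x)=\prod_j T_{ij}(x)^{l_{ij}}$ is a \emph{nonzero} constant, then no factor vanishes identically and $\sum_j l_{ij}\deg T_{ij}(x)=0$ with all $l_{ij}>0$, so every coordinate $T_{ij}(x)$ is constant and $\tau$ is constant, contradicting the definition of a polynomial curve. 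Spelling out that degree count is all that is needed to finish this case.

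The genuine loose end is the degenerate case you dismiss as ``similar'': if $\lambda\in\{0,-1\}$ then one monomial, say $T_0^{l_0}(x)$, is identically zero, and there coprimality does \emph{not} collapse the curve to a constant. It only forces the other two monomials to be nonzero constants, while in block $0$ one coordinate $T_{0j}(x)$ vanishes identically and the remaining coordinates of that block are completely unconstrained (this requires $n_0\ge 2$, so the phenomenon is invisible on the Pham--Brieskorn surface). Your closing sentence, ``constancy contradicts $\tau$ being a genuine polynomial curve,'' is therefore circular at exactly the point you yourself flagged. To be fair, the paper's one-line proof is equally silent here; the intended reading is that an SH-curve has all three monomials not identically zero (equivalently, that coprimality is understood to exclude the zero polynomial), and under that convention your main-case argument is the entire proof. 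You should either state that convention explicitly or rule out $T_i^{l_i}(x)\equiv 0$ at the outset rather than asserting that this case ``again collapses to constants.''
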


\begin{proof}
By Lemma~\ref{l01}, it suffices to show that the rational function $T_0^{l_0}/T_1^{l_1}$ is non-constant along any SH-curve. If this is not the case, we have that the polynomials $T_0^{l_0}(x)$ and $T_1^{l_1}(x)$ are proportional. Since they are coprime, these polynomials are constant. Then the polynomial $T_2^{l_2}(x)$ is constant as well, so the curve is constant, a contradiction.
\end{proof}

Lemma~\ref{aa} shows that the image $\tau(\CC)$ of an SH-curve $\tau\colon\CC\to X$ is contained in the smooth locus $X^{\reg}$. The following example shows that the converse statement does not hold.

\begin{example}
Consider the hypersurface $X$ given by
$$
T_{01}^3T_{02}+T_{11}^3T_{12}+T_{21}^2T_{22}=0
$$
and the curve $\tau\colon\CC\to X$ defined by
$$
T_{01}=x+1, \
T_{02}=x, \
T_{11}=x-1, \
T_{12}=x, \
T_{21}=x, \
T_{22}=-2(x^2+3).
$$
This curve is not an SH-curve, but all its points are smooth on $X$.
\end{example}

The following result generalizes Theorem~\ref{T2} to higher dimensions. In the proof we use the idea of the proof of \cite[Theorem~18.4]{Pr}. 

\begin{theorem} \label{SH}
Let $X$ be a trinomial hypersurface of Type~2. We assume that $l_{i1}\le\ldots\le l_{in_i}$ for $i=0,1,2$. Then the following conditions are equivalent. 
\begin{enumerate}
\item[(i)]
The hypersurface $X$ admits an SH-curve.
\item[(ii)]
The hypersurface $X$ admits a polynomial curve $\tau\colon\CC\to X^{\reg}$.
\item[(iii)] 
The triple $(l_{01},l_{11},l_{21})$ is platonic.
\end{enumerate}
\end{theorem}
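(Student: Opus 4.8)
The plan is to prove the cyclic chain of implications (ii)$\Rightarrow$(iii)$\Rightarrow$(i)$\Rightarrow$(ii), since (i)$\Rightarrow$(ii) is immediate from Lemma~\ref{aa} (every SH-curve lands in $X^{\reg}$). For the implication (ii)$\Rightarrow$(iii), I would start from a polynomial curve $\tau\colon\CC\to X^{\reg}$ and set $a(x)=T_0^{l_0}(x)$, $b(x)=T_1^{l_1}(x)$, $c(x)=T_2^{l_2}(x)$, which satisfy $a+b+c=0$. The key numerical input is the Mason--Stothers abc-Theorem applied to these three polynomials. Writing $m_{ij}$ for the number of distinct roots of $T_{ij}(x)$, each $\deg a(x)=\sum_j l_{0j}\deg T_{0j}(x)$ is bounded below by the weighted count $\sum_j l_{0j}m_{0j}$ only after arranging coprimality, so the crucial preliminary reduction is to pass to a curve for which $a,b,c$ are coprime --- i.e.\ to promote the smoothness hypothesis into an SH-condition. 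This is exactly where Lemma~\ref{aa} does the work: smoothness forces that for each $i$ at most one coordinate with exponent $1$ can vanish, and a careful analysis lets one factor out common roots to reduce to the genuinely coprime case.

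Granting coprimality, the abc-Theorem gives
$$
\max\{\deg a,\deg b,\deg c\}\le d_0(abc)-1\le \sum_{i,j}m_{ij}-1,
$$
and combining with $\deg a\ge \sum_j l_{0j}m_{0j}\ge l_{01}\sum_j m_{0j}$ (using the ordering $l_{i1}\le\cdots\le l_{in_i}$ and that the \emph{smallest} exponent controls the bound after the reduction) yields, after summing the three resulting inequalities, a relation of the shape
$$
\Bigl(1-\tfrac{1}{l_{01}}-\tfrac{1}{l_{11}}-\tfrac{1}{l_{21}}\Bigr)\,\deg(\text{product})\le -\,(\text{positive constant}).
$$
Since the degree is positive for a non-constant curve, the coefficient must be negative, forcing $1/l_{01}+1/l_{11}+1/l_{21}>1$, which is precisely the statement that $(l_{01},l_{11},l_{21})$ is platonic. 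I expect the main obstacle to lie here: the clean derivation of the weighted lower bound $\deg a\ge l_{01}(m_{01}+\cdots+m_{0n_0})$ requires that the reduction to coprimality not destroy the minimality of the exponent $l_{01}$, and one must check that the distinct-root counts $m_{ij}$ survive the factoring-out of common factors without collapsing the inequality. This is the delicate bookkeeping step, directly analogous to the argument of \cite[Theorem~18.4]{Pr}.

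For the converse implication (iii)$\Rightarrow$(i), I would construct an explicit SH-curve given a platonic triple $(l_{01},l_{11},l_{21})$. Because $1/l_{01}+1/l_{11}+1/l_{21}>1$, the triple is one of the finitely many types listed in the Introduction, and for each of these the classical Schwartz solution furnishes coprime polynomials $z_0(x),z_1(x),z_2(x)$ with $z_0^{l_{01}}+z_1^{l_{11}}+z_2^{l_{21}}=0$ (the platonic cases with some $l_{i1}=1$ being trivial to solve directly). The remaining task is to distribute these into the full set of variables $T_{ij}$. Here I would set $T_{i1}(x)=z_i(x)$ and assign the higher-index coordinates $T_{i2},\ldots,T_{in_i}$ to be suitable constants (or unit monomials) so that $T_i^{l_i}(x)=z_i(x)^{l_{i1}}\cdot(\text{nonzero constant})$; the coprimality of the three $z_i$ then transfers to coprimality of $T_0^{l_0},T_1^{l_1},T_2^{l_2}$, giving an SH-curve. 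One must only verify that such a constant assignment is consistent with matching the exponents, i.e.\ that rescaling the $z_i$ by scalars absorbs the discrepancy between $l_{i1}$ and the full monomial --- a routine check since we are free to choose the constants. This completes the cycle and hence the theorem.
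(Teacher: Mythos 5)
Your proposal is essentially correct, and its core numerical argument is genuinely slicker than the paper's. The paper does not prove (ii)$\Rightarrow$(iii) directly: it proves (i)$\Rightarrow$(iii) by applying Mason--Stothers to an SH-curve, and separately proves (ii)$\Rightarrow$(i) by showing that a curve in $X^{\reg}$ whose monomials fail to be coprime already forces some $l_{s1}=1$, hence platonicity, hence an SH-curve via (iii)$\Rightarrow$(i). For the abc step itself the paper runs a long chain of inequalities, first pinning down $l_{21}=2$, then $l_{11}\le 3$, then $l_{01}\le 5$, through auxiliary quantities $m_{ij}'$, $m_{ij}''$. Your one-shot version works and is shorter: with $M_i=\sum_j m_{ij}$ and $S=M_0+M_1+M_2$, the bounds $l_{i1}M_i\le\sum_j l_{ij}m_{ij}\le\deg T_i^{l_i}\le d_0(abc)-1\le S-1$ sum to $S\le(S-1)\bigl(\tfrac{1}{l_{01}}+\tfrac{1}{l_{11}}+\tfrac{1}{l_{21}}\bigr)$, which is impossible for $S\ge 1$ unless the triple is platonic; and $S\ge 1$ because a non-constant curve must have some non-constant coordinate. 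So the delicate bookkeeping you anticipate is not actually needed once each exponent is bounded below by the block minimum $l_{i1}$.

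The one place your plan would stumble is the ``reduction to coprimality.'' You cannot literally factor a common root out of $a$, $b$, $c$ and continue: after dividing by the gcd the three polynomials are no longer products of $l_{ij}$-th powers of the coordinates, so the lower bound $\deg a\ge l_{01}M_0$ --- the engine of the whole argument --- is lost. The correct move, and the one the paper makes, is that no reduction is needed: if $\alpha$ is a common root, then for each $i$ some $T_{ij_i}(\alpha)=0$, and smoothness of $\tau(\alpha)$ together with Lemma~\ref{aa} forces, for \emph{some} $i$ (not for each $i$, as you wrote), that exactly one coordinate of block $i$ vanishes and has $l_{ij_i}=1$; since $l_{i1}\le l_{ij_i}$ this gives $l_{i1}=1$ and the triple is platonic outright, so that branch of (ii)$\Rightarrow$(iii) terminates immediately rather than feeding back into the abc argument. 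Your (iii)$\Rightarrow$(i) construction coincides with the paper's, which sets $T_{ij}=1$ for $j>1$ and invokes the classical Schwartz solutions via Theorem~\ref{T2}.
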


\begin{proof}

Implication (i)$\Rightarrow$(ii) is observed above. For implication (iii)$\Rightarrow$(i), we assume that $(l_{01},l_{11},l_{21})$ is a platonic triple and let $T_{ij}(x)=1$ for all $i=0,1,2$ and all $1<j\le n_i$. By Theorem~\ref{T2}, the surface $V(T_{01}^{l_{01}}+T_{11}^{l_{11}}+T_{21}^{l_{21}})$ admits an SH-curve.

\smallskip

We come to implication (i)$\Rightarrow$(iii). Let $\tau\colon\CC\to X$ be an SH-curve. Without loss of generality we assume that $l_{01}\ge l_{11}\ge l_{21}\ge 2$. Let $a(x)=T_0^{l_0}(x)$, $b(x)=T_1^{l_1}(x)$, $c(x)=T_2^{l_2}(x)$.

Denote by $m_{ij}$ the number of pairwise distinct roots of the polynomial $T_{ij}(x)$. Then the Mason--Stothers abc-Theorem implies
\begin{align}
\sum_j l_{0j}m_{0j}\le \sum_j m_{0j}+\sum_j m_{1j}+\sum_j m_{2j} -1 \label{r1} \\
\sum_j l_{1j}m_{1j}\le \sum_j m_{0j}+\sum_j m_{1j}+\sum_j m_{2j} -1 \label{r2} \\
\sum_j l_{2j}m_{2j}\le \sum_j m_{0j}+\sum_j m_{1j}+\sum_j m_{2j} -1. \label{r3}
\end{align}
Summing up (\ref{r1}), (\ref{r2}), (\ref{r3}), we obtain
\begin{equation}
\sum_j l_{0j}m_{0j}+\sum_j l_{1j}m_{1j}+\sum_j l_{2j}m_{2j}\le
3\left(\sum_j m_{0j}+\sum_j m_{1j}+\sum_j m_{2j}\right) -3. \label{r4}
\end{equation}
Thus we have $l_{21}=2$. If $l_{11}=2$ then the triple $(l_{01},l_{11},l_{21})$ is platonic.

\smallskip

Assume that $l_{11}\ge 3$. Let $l_{21}=\ldots=l_{2s_2}=2$ and $l_{2j}\ge 3$ with $j>s_2$. We denote $l_{2j}$ and $m_{2j}$ with $j>s_2$ by $l_{2j}''$ and $m_{2j}''$ respectively, and $m_{21},\ldots,m_{2s_2}$ by $m_{2j}'$.

\smallskip

One obtains from~(\ref{r3}) the inequality
\begin{equation}
\sum_j m_{2j}'+\sum_j (l_{2j}''-1)m_{2j}''\le\sum_j m_{0j}+\sum_j m_{1j}-1. \label{r5}
\end{equation}
Then we have
\begin{equation}
\sum_j m_{2j}' \le \sum_j m_{0j}+\sum_j m_{1j}-1. \label{r6}
\end{equation}
It follows from~(\ref{r4}) and~(\ref{r6}) that
$$
\sum_j l_{0j}m_{0j}+\sum_j l_{1j}m_{1j}+\sum_j l_{2j}''m_{2j}''\le
3\left(\sum_j m_{0j}+\sum_j m_{1j}+\sum_j m_{2j}''\right) +\sum_j m_{2j}' -3 \le
$$
$$
\le 4\left(\sum_j m_{0j}+\sum_j m_{1j}\right) +3\sum_j m_{2j}'' -4.
$$
Thus we have
\begin{equation}
\sum_j l_{0j}m_{0j}+\sum_j l_{1j}m_{1j} \le 4\left(\sum_j m_{0j}+\sum_j m_{1j}\right) - 4.
\label{r7}
\end{equation}
This proves that $l_{11}=3$. Let $l_{11}=\ldots=l_{1s_1}=3$ and $l_{1j}\ge 4$ with $j>s_1$. We denote $l_{1j}$ and $m_{1j}$ with $j>s_1$ by $l_{1j}''$ and $m_{1j}''$ respectively, and $m_{11},\ldots,m_{1s_1}$ by $m_{1j}'$.

\smallskip

Then~(\ref{r2}) can be rewritten as
\begin{equation}
2\sum_j m_{1j}'+\sum_j (l_{1j}''-1)m_{1j}''\le\sum_j m_{0j}+\sum_j m_{2j}-1. \label{r8}
\end{equation}

Summing up (\ref{r5}) and (\ref{r8}), we obtain
\begin{equation}
\sum_j m_{1j}' \le 2\sum_j m_{0j}-2+\sum_j (2-l_{1j}'')m_{1j}''+\sum_j (2-l_{2j}'')m_{2j}''.
\label{r9}
\end{equation}

From (\ref{r3}) and (\ref{r9}) we get
\begin{equation}
\sum_j m_{2j}' \le 3\sum_j m_{0j}-3+\sum_j (3-l_{1j}'')m_{1j}''+\sum_j (3-2l_{2j}'')m_{2j}''.
\label{r10}
\end{equation}

Using (\ref{r1}), (\ref{r9}), (\ref{r10}), we obtain
$$
\sum_j l_{0j}m_{0j}\le \sum_j m_{0j}+\sum_j m_{1j}'+\sum_j m_{1j}''+\sum_j m_{2j}'+\sum_j m_{2j}''-1\le
$$
$$
\le \sum_j m_{0j}+2\sum_j m_{0j}-2+\sum_j(6-2l_{1j}'')m_{1j}''+3\sum_j m_{0j}-3
+\sum_j(6-3l_{2j}'')m_{2j}''-1\le 6\sum_j m_{0j}-6.
$$
This proves that $l_{01}\le 5$ and thus the triple $(l_{01},l_{11},l_{21})$ is platonic.

\smallskip

Finally let us prove implication (ii)$\Rightarrow$(i). Consider a curve $\tau\colon\CC\to X^{\reg}$ and assume that the polynomials $T_0^{l_0}(x)$, $T_1^{l_1}(x)$, $T_2^{l_2}(x)$ are not coprime. Let $L(x)$ be a linear form that divides all these three polynomials. There exist indices
$1\le j_s\le n_s$, $s=0,1,2$, such that $L(x)$ divides the polynomials $T_{sj_s}(x)$, $s=0,1,2$.

If at least one of the exponents $l_{sj_s}$ equals $1$, then the triple $(l_{01},l_{11},l_{21})$ is platonic and we use implication (iii)$\Rightarrow$(i). 

If all the exponents $l_{si_s}$ are greater than $1$, we consider the root $x=\alpha$ of the linear form~$L(x)$. By Lemma~\ref{aa}, the point $\tau(\alpha)$ is a singular point on $X$, a contradiction. 

\smallskip

This completes the proof of Theorem~\ref{SH}. 
\end{proof}

\begin{remark}
By~\cite[Section~2]{KZ}, an algebraic variety $X$ is said to be \emph{$\AA^1$-poor} if there exists a subvariety $Y$ of $X$ of codimension at least $2$ such that every polynomial curve on $X$ meets $Y$. 
Theorem~\ref{SH} implies that every trinomial hypersurface $X$ of Type~2 such that the triple  $(l_{01},l_{11},l_{21})$ is not platonic is $\AA^1$-poor. Indeed, any polynomial curve on $X$ meets the singular locus $Y$ of $X$. 
In particular, such hypersusfaces are \emph{rigid} in a sence that $X$ admits no non-trivial $\GG_a$-action or, equivalently, the algebra $\CC[X]$ admits no nonzero locally nilpotent derivation. 
Rigid factorial trinomial hypersurfaces of Type~2 are characterized in~\cite[Theorem~1]{Ar}. Moreover, an explicit description of the automorphism group of a rigid trinomial hypersurface can be found in~\cite[Theorem~5.5]{AG}.
\end{remark}

\begin{remark}
If $\tau\colon\CC\to X$ is a polynomial curve on a trinomial hypersurface $X$ of Type~1, then the polynomial $T_1^{l_1}(x)$ and $T_2^{l_2}(x)$ are coprime automatically. Thus every polynomial curve on $X$ is an SH-curve.
\end{remark}

%%%%%%%%%%%%%%%%%%%%%%%%%%%%%%%%%%%%%%%%%%%%%%

\end{document}